\documentclass[11pt]{amsart}
\usepackage[margin=1in]{geometry}
\usepackage[T1]{fontenc}
\usepackage[utf8]{inputenc}
\usepackage{lmodern}
\usepackage{microtype}
\usepackage{amsmath,amssymb,amsthm,mathtools,latexsym}
\usepackage{tikz}
\usetikzlibrary{arrows.meta,positioning,calc}
\usepackage[colorlinks=true,linkcolor=blue!60!black,urlcolor=blue!60!black,citecolor=blue!60!black]{hyperref}

\newtheorem{theorem}{Theorem}
\newtheorem{lemma}{Lemma}

\theoremstyle{definition}

\begin{document}

\title[An elementary proof of the Chebyshev bounds]{An Elementary Proof of the Chebyshev Bounds via a Weighted Prime Sum}

\author{Kai Hubbard}
\address{Department of Mathematics, Pasadena City College, Pasadena, CA 91106, USA}
\email{kaihubbard520@gmail.com}

\date{\today}

\subjclass[2020]{11N05 (primary), 11A41 (secondary)}
\keywords{Chebyshev bounds, prime-counting function, elementary proof, Mertens' theorem}

\begin{abstract}
The Chebyshev bounds for the prime-counting function, i.e., $\pi(x) \asymp x/\ln x$, is established in a new way. This new approach follows the outline
\[
\pi(x) \;\asymp\; \Bigl(\sum_{p \le x} \sqrt{\tfrac{\ln p}{p}}\Bigr)^{\!2} \;\asymp\; \frac{x}{\ln x}.
\]
Here, the second $\asymp$ is derived from the classical estimate by Mertens, i.e., $\sum_{p \le x} \frac{\ln p}{p} = \ln x + O(1)$; while the first $\asymp$ is proved by considering the difference
\[
\Bigl(\sum_{p \le x} \sqrt{\tfrac{\ln p}{p}}\Bigr)^{\!2} \;-\; \sum_{p \le x} \frac{\ln p}{p},
\]
which is shown as having the same order as $\pi(x)$.
\end{abstract}

\maketitle

\section{Introduction}

The Chebyshev bounds asserts that $\pi(x) \asymp x/\ln x$, where $\pi(x)$ denotes the number of primes less than or equal to $x$.  These bounds were first established by Chebyshev~\cite{Chebyshev} in 1852, and they represent the strongest statement about the distribution of primes that can be obtained without complex analysis.  The bounds were later sharpened to the asymptotic $\pi(x) \sim x/\ln x$ (the prime number theorem) by Hadamard~\cite{Hadamard} and de la Vall\'ee Poussin~\cite{Vallee} in 1896.  An elementary proof of the prime number theorem was given by Selberg~\cite{Selberg} and Erd\H{o}s~\cite{Erdos} in 1949.


Instead of considering Mertens' classical summation $\sum_{p \le x} \frac{\ln p}{p}$, we study the square of a closely related sum, i.e.\ $\bigl(\sum_{p \le x} \sqrt{\ln p / p}\bigr)^{2}$, and finally establish the Chebyshev's bounds for $\pi(x)$ from our study.  The logical deduction of the argument is summarized in Figure~\ref{fig:logictree}.

\begin{figure}[h]
\centering
\begin{tikzpicture}[font=\footnotesize,>={Latex[length=2.5mm]}]
  \node (top) at (0,3.6) {$\pi(x) \asymp \dfrac{x}{\ln x}$};
  \node (left) at (-3.8,1.8) {$\Bigl(\sum_{p \le x} \sqrt{\tfrac{\ln p}{p}}\Bigr)^{\!2} \asymp \dfrac{x}{\ln x}$};
  \node (right) at (3.6,1.8) {$\pi(x) \asymp \Bigl(\sum_{p \le x} \sqrt{\tfrac{\ln p}{p}}\Bigr)^{\!2}$};
  \node (mid) at (3.6,0.0) {$\pi(x) \asymp \Bigl(\sum_{p \le x} \sqrt{\tfrac{\ln p}{p}}\Bigr)^{\!2} - \sum_{p \le x} \dfrac{\ln p}{p}$};
  \node (bot) at (-3.8,-1.5) {$\sum_{p \le x} \dfrac{\ln p}{p} = \ln x + O(1)$ \;(Mertens' estimate)};
  \draw[->] (left)  -- (top);
  \draw[->] (right) -- (top);
  \draw[->] (mid)   -- (right);
  \draw[->] (left)  -- (mid);
  \draw[->] (bot)   -- (left);
\end{tikzpicture}
\caption{Deduction graph of the argument.}
\label{fig:logictree}
\end{figure}

For the ease of expression, we need some convention of notation throughout this paper.

\section{Notation}

Throughout this paper, the variable $x$ is supposed to be greater than or equal to $2$.  This is to make $1/\ln x$ meaningful.

The relation $f(x) \asymp g(x)$ means $f$ and $g$ have the same order of magnitude, i.e., there exist positive constants $k$ and $K$ (independent of $x$) such that $k \cdot g(x) \le f(x) \le K \cdot g(x)$.  Clearly, if $f(x) \asymp g(x)$, then $g(x) \asymp f(x)$, i.e., the relation $\asymp$ is symmetric.

Throughout this paper, we use the following notations:
\begin{itemize}
\item $\pi(x) = $ the number of primes less than or equal to $x$;
\item $p$ represents a prime (according to context);
\item $p_n = $ the $n$-th prime;
\item $M(x) = \sum_{p \le x} \dfrac{\ln p}{p}$ ($=$ Mertens' summation);
\item $S(x) = \sum_{p \le x} \sqrt{\dfrac{\ln p}{p}}$ ($=$ the summation we study);
\item $E(x) = \bigl(\sum_{p \le x} \sqrt{\ln p / p}\bigr)^{2} - \sum_{p \le x} \frac{\ln p}{p} = S^{2}(x) - M(x)$ ($=$ the difference we study);
\item $h(x) = \sqrt{\dfrac{x}{\ln x}}$ ($=$ the square root of the asymptotic function $x/\ln x$).
\end{itemize}

Notice that
\[
h'(x) \;=\; \frac{1}{2} \cdot \frac{1}{\sqrt{x/\ln x}} \cdot \frac{1 \cdot \ln x - x \cdot \frac{1}{x}}{(\ln x)^{2}} \;>\; 0
\qquad (\text{for } x \ge 3),
\]
i.e., $h(x) = \sqrt{x/\ln x}$ is increasing, and so $1/h(x) = \sqrt{\ln x / x}$ is decreasing.

Under this notation, the outline of our study becomes
\begin{equation}
\left.
\begin{aligned}
& S^{2}(x) \;\asymp\; \dfrac{x}{\ln x} \\
& \Downarrow \\
& \pi(x) \;\asymp\; S^{2}(x) - M(x) \\
& \Downarrow \\
& \pi(x) \;\asymp\; S^{2}(x)
\end{aligned}
\right\}
\;\;\Longrightarrow\;\;
\pi(x) \;\asymp\; \dfrac{x}{\ln x}.
\label{eq:outline}
\end{equation}
Now we establish the above diagram step by step.  We begin with the first part of our outline in~(\ref{eq:outline}).

\section{The Estimate $S^{2}(x) \asymp x/\ln x$}

This conclusion is equivalent to $S(x) \asymp \sqrt{x/\ln x}$, which is, in turn, equivalent to the following Lemma~\ref{lem:upper} and Lemma~\ref{lem:lower}.

\begin{lemma}\label{lem:upper}
$S(x) = O\bigl(\sqrt{x/\ln x}\bigr)$.
\end{lemma}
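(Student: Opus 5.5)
The plan is to bound $S(x)$ blockwise over dyadic intervals, using Mertens' estimate $M(x)=\ln x+O(1)$ as the only arithmetic input. We must not use any upper bound for $\pi(x)$, since that would be circular. A direct Cauchy--Schwarz bound $S(x)\le\sqrt{\pi(x)M(x)}$ is therefore unavailable. Instead, Mertens itself will control how many primes lie in a dyadic block.

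\textbf{Step 1 (counting primes in a block via Mertens).} For $y\ge 1$, Mertens' estimate gives $M(2y)-M(y)=\ln 2+O(1)\le C$ for an absolute constant $C$. Every prime $p\in(y,2y]$ satisfies
\[
\frac{\ln p}{p}\ge \frac{\ln y}{2y},
\]
because $\ln p\ge \ln y$ and $p\le 2y$. Hence, for $y\ge 2$, the number $N(y)$ of primes in $(y,2y]$ satisfies $N(y)\cdot \frac{\ln y}{2y}\le C$, that is,
\[
N(y)\le \frac{2Cy}{\ln y}.
\]
This is a local Chebyshev-type bound obtained from Mertens alone.

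\textbf{Step 2 (bounding the block sum).} On $(y,2y]$ with $y\ge 3$, the summand $\sqrt{\ln p/p}=1/h(p)$ is at most $1/h(y)$, since $1/h$ is decreasing. Therefore
\[
\sum_{y<p\le 2y}\sqrt{\frac{\ln p}{p}}\le N(y)\sqrt{\frac{\ln y}{y}}\le 2C\sqrt{\frac{y}{\ln y}}=2C\,h(y).
\]

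\textbf{Step 3 (summing over $y=x/2^{k}$).} Cover $(0,x]$ by the blocks $(x/2^{k+1},x/2^{k}]$ for $k\ge 0$. Split these blocks into two ranges.
\begin{itemize}
\item \emph{Large blocks}, those with $x/2^{k+1}\ge\sqrt{x}$. Here $\ln(x/2^{k+1})\ge\tfrac12\ln x$. By Step 2 each such block contributes at most
\[
2C\sqrt{\frac{2x}{2^{k+1}\ln x}},
\]
and summing the geometric series in $2^{-k/2}$ gives $O\bigl(\sqrt{x/\ln x}\bigr)$.
\item \emph{Small range}, the primes $p\le 2\sqrt{x}$. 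We use the trivial bound: the summand is at most $1$ and there are at most $2\sqrt{x}$ such terms. This gives $O(\sqrt{x})$, which is too weak. A better route is to reuse Step 2 on this range, bounding by $\sum_j h(2^j)$ for $2^j\le 2\sqrt{x}$. That sum is $O(x^{1/4})$, and $x^{1/4}=O\bigl(\sqrt{x/\ln x}\bigr)$.
\end{itemize}
Finitely many small primes (say $p<3$, where monotonicity of $h$ was not asserted) contribute $O(1)$.

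\textbf{Main obstacle.} The delicate point is Step 3. The ratio $h(y/2)/h(y)$ is not uniformly bounded away from $1$ for small $y$, so the block sums do not form a clean geometric series all the way down. Splitting at $\sqrt{x}$ handles this: in the upper range $\ln y\asymp\ln x$, and the lower range is absorbed crudely. We also need the uniformity of the $O(1)$ in Mertens' estimate to get a single constant $C$ valid for all blocks.
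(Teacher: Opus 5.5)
Your proof is correct, but it takes a genuinely different route from the paper's.

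\textbf{The paper's route.} The paper applies Abel summation with weights $\ln p/p$ and $f=h$, writing $S(x)=M(x)h(x)-\int_2^x M(t)h'(t)\,dt$. It then inserts Mertens' estimate and integrates by parts, so that the two main terms $\sqrt{x\ln x}$ cancel exactly. What remains is $\int_2^x dt/\sqrt{t\ln t}+O(h(x))$, which is $O(h(x))$ by L'H\^opital.

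\textbf{Your route.} You work locally instead. Mertens says $M$ grows by only $O(1)$ across a dyadic interval. Combined with $\ln p/p\ge \ln y/(2y)$, this gives the local count $\pi(2y)-\pi(y)\le 2Cy/\ln y$. Monotonicity of $1/h$ and a geometric sum then finish the job.

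\textbf{Comparison.} Your argument needs no integration and no cancellation of main terms. It is the mirror image of the paper's own proof of Lemma~\ref{lem:lower}, which uses $M(x)-M(x/c)>1$ and the same monotonicity of $1/h$ in the opposite direction. It also proves more than needed. Summing your Step~1 over dyadic blocks already gives $\pi(x)=O(x/\ln x)$ directly from Mertens, so your circularity worry is moot. In any case, the Cauchy--Schwarz bound $\sqrt{\pi(x)M(x)}$ would only give $O(\sqrt{x})$. The Abel-summation route, for its part, isolates the true main term $\int_2^x dt/\sqrt{t\ln t}\sim 2\sqrt{x/\ln x}$. Given the sharper input $M(x)=\ln x+c+o(1)$, it would upgrade to the asymptotic $S(x)\sim 2\sqrt{x/\ln x}$. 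Dyadic blocking inherently loses constant factors.

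\textbf{Cosmetic points.} Your ``main obstacle'' is not real. The ratio $h(y/2)/h(y)=\sqrt{\ln y/(2\ln(y/2))}$ is decreasing in $y$ and at most $\sqrt{3/4}$ for $y\ge 8$. So a single geometric series covers every block down to a bounded range, and the split at $\sqrt{x}$ is unnecessary. The false start with the trivial $O(\sqrt{x})$ bound should also be removed from a final write-up.
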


\begin{lemma}\label{lem:lower}
$S(x) > k \cdot \sqrt{x/\ln x}$ for some constant $k$.
\end{lemma}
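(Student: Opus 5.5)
We prove the lower bound $S(x) > k\sqrt{x/\ln x}$ from Mertens' estimate $M(x) = \ln x + O(1)$ alone, without any a priori information on $\pi(x)$. The idea is to compare each term $\sqrt{\ln p/p}$ with $\ln p/p$ via the monotone factor $1/h(p) = \sqrt{\ln p/p}$. For every prime $p \le x$ we have the identity
\[
\sqrt{\frac{\ln p}{p}} \;=\; \frac{\ln p}{p}\cdot \sqrt{\frac{p}{\ln p}} \;=\; \frac{\ln p}{p}\, h(p).
\]
Since $h$ is increasing (for $p\ge 3$, as noted in the Notation section), it is natural to restrict to primes in a range $y < p \le x$ where $h(p) \ge h(y)$. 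Discarding the other terms, which are nonnegative, then gives
\[
S(x) \;\ge\; h(y)\bigl(M(x) - M(y)\bigr).
\]

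Next, choose $y = x/C$ for a large fixed constant $C$. Mertens' estimate gives constants $A$ with $|M(t) - \ln t| \le A$ for all $t \ge 2$. Hence
\[
M(x) - M(x/C) \;\ge\; \ln C - 2A,
\]
so fixing $C = e^{2A+1}$ makes this difference at least $1$. Moreover $h(x/C) = \sqrt{(x/C)/\ln(x/C)} \ge C^{-1/2}\sqrt{x/\ln x}$, because $\ln(x/C) \le \ln x$. Combining these, for $x/C \ge 3$ we get $S(x) \ge C^{-1/2}\sqrt{x/\ln x}$.

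For the finitely bounded range $2 \le x < 3C$, argue directly. Here $S(x) \ge \sqrt{\ln 2/2} > 0$, while $\sqrt{x/\ln x}$ is bounded above on $[2, 3C]$. Shrinking $k$ to cover this range therefore gives a uniform constant. (The strict inequality is handled by taking $k$ slightly smaller than the bound obtained.)

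The main obstacle is just making sure the Mertens error term is used in the two-sided form $|M(t) - \ln t| \le A$. This is what lets the difference $M(x) - M(x/C)$ stay bounded below. The rest is routine bookkeeping with the monotonicity of $h$.
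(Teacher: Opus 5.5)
Your proof is correct and is essentially the paper's argument: keep only the primes in $(x/C, x]$, use the monotonicity of $h$ to get $S(x) \ge h(x/C)\bigl(M(x)-M(x/C)\bigr)$, and pick $C$ via Mertens' estimate so that the bracket exceeds $1$. Your explicit choice $C=e^{2A+1}$ and your separate treatment of the range $2 \le x < 3C$, where neither Mertens' estimate at $x/C$ nor the monotonicity of $h$ is available, make precise a point the paper leaves implicit.
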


To prove Lemma~\ref{lem:upper}, we need the following two famous results.

\begin{lemma}[Abel's identity, see Theorem~4.2 in~\cite{Apostol}]\label{lem:abel}
For any arithmetical function $a(n)$ with $a(1)=0$ and any continuously differentiable real function $f(x)$, we have
\[
\sum_{n \le x} a(n) f(n) \;=\; \Bigl(\sum_{n \le x} a(n)\Bigr) f(x) \;-\; \int_{2}^{x} \Bigl(\sum_{n \le t} a(n)\Bigr) f'(t)\, dt.
\]
\end{lemma}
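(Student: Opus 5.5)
The plan is the standard ``write $f(n)$ as $f(x)$ minus an integral, then interchange sum and integral'' argument; no earlier result is needed. Set $A(t)=\sum_{n\le t}a(n)$. Because $a(1)=0$, we have $A(t)=0$ for $1\le t<2$, and only integers $n$ with $2\le n\le x$ contribute to either side. Throughout, $x\ge 2$ as fixed in the Notation section, and $f$ is continuously differentiable on $[2,x]$.

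First, for each integer $n$ with $2\le n\le x$, the fundamental theorem of calculus gives
\[
f(n)=f(x)-\int_n^x f'(t)\,dt .
\]
Multiplying by $a(n)$ and summing over $2\le n\le x$ yields
\[
\sum_{n\le x}a(n)f(n)=A(x)f(x)-\sum_{2\le n\le x}a(n)\int_n^x f'(t)\,dt .
\]
The $n=1$ term is absent on the left because $a(1)=0$, and $A(x)=\sum_{2\le n\le x}a(n)$ for the same reason.

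Second, rewrite each integral as $\int_2^x \mathbf{1}_{[n\le t]}\,f'(t)\,dt$, using $n\ge 2$. The sum over $n$ is finite, so it can be moved inside the integral, giving
\[
\sum_{2\le n\le x}a(n)\int_n^x f'(t)\,dt=\int_2^x\Bigl(\sum_{2\le n\le t}a(n)\Bigr)f'(t)\,dt=\int_2^x A(t)f'(t)\,dt .
\]
The last equality again uses $a(1)=0$. Substituting this back gives the identity as stated.

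There is no real obstacle here. The only points needing care are bookkeeping: the hypothesis $a(1)=0$ is exactly what allows the lower limit of integration to be $2$ rather than $1$, and the interchange of sum and integral is legitimate because the sum has finitely many terms and $f'$ is continuous, hence integrable, on $[2,x]$.
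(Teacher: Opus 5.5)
Your proof is correct and complete. The paper gives no proof of its own here. It cites Theorem~4.2 of Apostol, which states the general form $\sum_{y<n\le x}a(n)f(n)=A(x)f(x)-A(y)f(y)-\int_y^x A(t)f'(t)\,dt$ for $0<y<x$.

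Apostol proves it by discrete summation by parts:
\begin{itemize}
\item write $a(n)=A(n)-A(n-1)$ and regroup into $\sum A(n)\bigl(f(n)-f(n+1)\bigr)$ plus boundary terms;
\item replace each difference by $-\int_n^{n+1}f'(t)\,dt$, using that $A$ is constant on $[n,n+1)$;
\item handle the end intervals $[y,\lfloor y\rfloor+1]$ and $[\lfloor x\rfloor,x]$ separately.
\end{itemize}
The paper's version is then the case $y=2$. Since $a(1)=0$ gives $A(2)=a(2)$, restoring the $n=2$ term on the left cancels the boundary term $-A(2)f(2)$.

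Your route applies the fundamental theorem of calculus to each $f(n)$ and then swaps a finite sum with the integral via indicators. It proves the lower-limit-$2$ form directly, with no floor-function bookkeeping, and shows exactly where $a(1)=0$ is used. It also generalizes at no cost: running it from $y$ with $A(t)-A(y)$ in place of $A(t)$ gives Apostol's form after one more use of the fundamental theorem of calculus.

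What Apostol's argument offers in exchange is the intermediate, purely discrete identity $\sum_{n\le k}a(n)f(n)=A(k)f(k)-\sum_{1\le n<k}A(n)\bigl(f(n+1)-f(n)\bigr)$. It requires no differentiability and is often useful on its own.
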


\noindent
Clearly, this conclusion is also true if $\sum_{n \le x}$ is replaced by $\sum_{p \le x}$, since we could define $a(n)=0$ for all composite number $n$'s.

\begin{lemma}[Mertens' estimate, see~\cite{Mertens} or Theorem~4.10 in~\cite{Apostol}]\label{lem:mertens}
\[
\sum_{p \le x} \frac{\ln p}{p} \;=\; \ln x + O(1).
\]
\end{lemma}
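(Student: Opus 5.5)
This is Mertens' estimate. I will prove it directly from the factorization of $N!$, taking care not to presuppose Chebyshev's bounds. That would be circular, since the paper derives them. The only prime-counting input I need is the crude upper bound $\theta(x):=\sum_{p\le x}\ln p = O(x)$, and I will obtain it by an independent binomial-coefficient argument.

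\emph{Step 1 (upper bound for $\theta$).} Every prime $p$ with $n<p\le 2n$ divides $\binom{2n}{n}$, and $\binom{2n}{n}\le (1+1)^{2n}=4^n$. Hence
\[
\theta(2n)-\theta(n)\le n\ln 4 .
\]
For real $x$ I take $n=\lceil x/2\rceil$, so that $x\le 2n$. The primes in $(n,x]$ all lie in $(n,2n]$, which gives
\[
\theta(x)-\theta(\lceil x/2\rceil)\le \lceil x/2\rceil\ln 4\le (x/2+1)\ln 4 .
\]
Iterating along $x, \lceil x/2\rceil, \dots$ until the argument drops below $2$, and summing the resulting geometric series, yields $\theta(x)\le x\ln 4+O(\ln x)=O(x)$. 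A side remark: $\binom{2n}{n}$ also carries a factor of $2$, so it does not count as a prime in $(n,2n]$. This does not affect the bound, because I only need divisibility by the primes in $(n,2n]$, not equality.

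\emph{Step 2 (two expressions for $\ln N!$).} Let $N=\lfloor x\rfloor$. Comparing the sum $\sum_{m\le N}\ln m$ with $\int_1^N\ln t\,dt$ gives
\[
\ln N! = x\ln x - x + O(\ln x).
\]
Legendre's formula gives
\[
\ln N!=\sum_{p\le x}\ln p\sum_{k\ge1}\Bigl\lfloor \frac{x}{p^k}\Bigr\rfloor .
\]
I split the right-hand side into the $k=1$ terms and the $k\ge2$ terms.
\begin{itemize}
\item For $k\ge2$, I bound the contribution by $x\sum_p \frac{\ln p}{p(p-1)}$. The series converges, so this part is $O(x)$.
\item For $k=1$, I write $\lfloor x/p\rfloor = x/p+O(1)$. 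This gives $x\,M(x)+O(\theta(x))$, and by Step 1 the error is $O(x)$.
\end{itemize}

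\emph{Step 3 (conclusion).} Equating the two expressions gives
\[
x\,M(x)=x\ln x+O(x).
\]
Dividing by $x$ yields $M(x)=\ln x+O(1)$, which is the statement.

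The main obstacle is Step 1. It is the only place where genuine arithmetic information about primes enters, and it is exactly where circularity with the Chebyshev bounds could creep in. Only the upper bound $\theta(x)=O(x)$ is needed, and the binomial-coefficient argument supplies it without any lower bound on $\pi(x)$. Everything else is routine bookkeeping with the floor function and an integral comparison.
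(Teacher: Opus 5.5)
Your proof is correct, and it is essentially the classical argument behind the paper's citation; the paper gives no proof of its own, only references to Mertens and to Apostol's Theorem~4.10. That argument compares Legendre's formula for $\ln\lfloor x\rfloor!$ with $x\ln x-x+O(\ln x)$, absorbs the prime-power terms into $\sum_p \ln p/(p(p-1))<\infty$, and controls the fractional parts by $\theta(x)=O(x)$, which you derive from $\binom{2n}{n}\le 4^n$ instead of importing it.

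Your concern about circularity is well placed, but this input is not free. Since $\pi(x)\le\sqrt{x}+2\theta(x)/\ln x$, your Step~1 already proves the upper half $\pi(x)=O(x/\ln x)$ of the paper's final theorem. The lemma itself implies the same bound, via $\theta(x)-\theta(x/2)\le x\,\bigl(M(x)-M(x/2)\bigr)=O(x)$.
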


Now we are ready to prove Lemma~\ref{lem:upper}.

\begin{proof}[Proof of Lemma~\ref{lem:upper}]
We have
\begin{align*}
S(x) \;&=\; \sum_{p \le x} \sqrt{\frac{\ln p}{p}} \\
     \;&=\; \sum_{p \le x} \frac{\ln p}{p} \cdot \sqrt{\frac{p}{\ln p}} \\
     \;&=\; \Bigl(\sum_{p \le x} \frac{\ln p}{p}\Bigr) h(x) - \int_{2}^{x} \Bigl(\sum_{p \le t} \frac{\ln p}{p}\Bigr) h'(t)\, dt && (\text{by Lemma~\ref{lem:abel}})\\
     \;&=\; M(x) h(x) - \int_{2}^{x} M(t) h'(t)\, dt \\
     \;&=\; \bigl(\ln x + O(1)\bigr) h(x) - \int_{2}^{x} \bigl(\ln t + O(1)\bigr) h'(t)\, dt && (\text{by Lemma~\ref{lem:mertens}})\\
     \;&=\; h(x) \ln x + O\bigl(h(x)\bigr) - \int_{2}^{x} \ln t \cdot h'(t)\, dt + O\!\left(\int_{2}^{x} h'(t)\, dt\right) \\
     \;&=\; \sqrt{\frac{x}{\ln x}} \cdot \ln x - \int_{2}^{x} \ln t \cdot h'(t)\, dt + O\bigl(h(x)\bigr) + O\bigl(h(x)-h(2)\bigr) \\
     \;&=\; \sqrt{x \cdot \ln x} - \int_{2}^{x} \ln t \cdot h'(t)\, dt + O\bigl(h(x)\bigr).
\end{align*}
We rewrite the above as
\begin{equation}
S(x) \;=\; \sqrt{x \cdot \ln x} \;-\; \int_{2}^{x} \ln t \cdot h'(t)\, dt \;+\; O\bigl(h(x)\bigr). \label{eq:Sx}
\end{equation}

Now we estimate the integral in the middle:
\begin{align*}
\int_{2}^{x} \ln t \cdot h'(t)\, dt
\;&=\; -\int_{2}^{x} \frac{h(t)}{t}\, dt + \ln t \cdot h(t)\Big|_{2}^{x} && (\text{integration by parts}) \\
\;&=\; -\int_{2}^{x} \frac{\sqrt{t / \ln t}}{t}\, dt + \ln t \cdot \sqrt{\frac{t}{\ln t}}\,\Big|_{2}^{x} \\
\;&=\; -\int_{2}^{x} \frac{1}{\sqrt{t \cdot \ln t}}\, dt + \sqrt{t \cdot \ln t}\,\Big|_{2}^{x} \\
\;&=\; -\int_{2}^{x} \frac{1}{\sqrt{t \cdot \ln t}}\, dt + \sqrt{x \cdot \ln x} \;+\; \text{constant} \\
\;&=\; O\!\left(\sqrt{\frac{x}{\ln x}}\right) + \sqrt{x \cdot \ln x} + \text{constant}.
\end{align*}
That is,
\begin{equation}
\int_{2}^{x} \ln t \cdot h'(t)\, dt \;=\; O\!\left(\sqrt{x/\ln x}\right) + \sqrt{x \cdot \ln x} + \text{constant}. \label{eq:integral}
\end{equation}
The last equality is derived from the following limit, computed by L'Hospital's rule:
\[
\lim_{x \to \infty} \frac{\int_{2}^{x} \frac{1}{\sqrt{t \cdot \ln t}}\, dt}{\sqrt{x / \ln x}}
\;=\; \lim_{x \to \infty} \frac{\dfrac{1}{\sqrt{x \cdot \ln x}}}{\dfrac{1}{2} \cdot \dfrac{1}{\sqrt{x/\ln x}} \cdot \dfrac{\ln x - 1}{(\ln x)^{2}}}
\;=\; \lim_{x \to \infty} \frac{2}{1 - \frac{1}{\ln x}}
\;=\; 2.
\]

Lastly, plugging Equation~(\ref{eq:integral}) into Equation~(\ref{eq:Sx}), we get
\begin{align*}
S(x) \;&=\; \sqrt{x \cdot \ln x} - \int_{2}^{x} \ln t \cdot h'(t)\, dt + O\bigl(h(x)\bigr) \\
     \;&=\; \sqrt{x \cdot \ln x} - \Bigl(O\bigl(\sqrt{x/\ln x}\bigr) + \sqrt{x \cdot \ln x} + \text{constant}\Bigr) + O\!\left(\sqrt{\frac{x}{\ln x}}\right) \\
     \;&=\; O\!\left(\sqrt{x/\ln x}\right).
\end{align*}
This completes the proof of Lemma~\ref{lem:upper}.
\end{proof}

Now let us focus on Lemma~\ref{lem:lower}.

\begin{proof}[Proof of Lemma~\ref{lem:lower}]
For any fixed constant $c$, consider the partial sum
\[
S(x) - S(x/c) \;=\; \sum_{x/c < p \le x} \sqrt{\frac{\ln p}{p}}.
\]
Since $x/c < p$, we have
\[
\sqrt{\frac{\ln p}{p}} \;\le\; \sqrt{\frac{\ln(x/c)}{x/c}}
\qquad \bigl(\sqrt{\ln x / x} = 1/h(x) \text{ is decreasing}\bigr),
\]
so
\[
\frac{\ln p}{p} \;\le\; \sqrt{\frac{\ln(x/c)}{x/c}} \cdot \sqrt{\frac{\ln p}{p}},
\]
thus
\[
\sum_{x/c < p \le x} \frac{\ln p}{p}
\;\le\; \sqrt{\frac{\ln(x/c)}{x/c}} \cdot \Bigl(\sum_{x/c < p \le x} \sqrt{\frac{\ln p}{p}}\Bigr),
\]
i.e.,
\[
\frac{\sum_{x/c < p \le x} \frac{\ln p}{p}}{\sqrt{\ln(x/c)/(x/c)}}
\;\le\; \sum_{x/c < p \le x} \sqrt{\frac{\ln p}{p}}
\;=\; S(x) - S(x/c) \;<\; S(x).
\]
We can choose a suitable $c$ such that $\sum_{x/c < p \le x} \frac{\ln p}{p} > 1$ (see~(\ref{eq:gap}) at the end of this proof).  Then we have
\[
S(x) \;>\; \frac{\sum_{x/c < p \le x} \frac{\ln p}{p}}{\sqrt{\ln(x/c)/(x/c)}}
\;>\; \frac{1}{\sqrt{\ln(x/c)/(x/c)}}
\;=\; \sqrt{\frac{x/c}{\ln(x/c)}}
\;=\; \frac{1}{\sqrt{c}} \cdot \sqrt{\frac{x}{\ln x - \ln c}}
\;>\; \frac{1}{\sqrt{c}} \cdot \sqrt{\frac{x}{\ln x}}.
\]
This is the conclusion of Lemma~\ref{lem:lower}.

Finally, we show that a constant $c$ could be chosen such that $\sum_{x/c < p \le x} \frac{\ln p}{p} > 1$.  In fact,
\begin{equation}
\sum_{x/c < p \le x} \frac{\ln p}{p} \;=\; M(x) - M(x/c) \;=\; \ln x - \ln(x/c) + O(1) \;=\; \ln c + O(1), \label{eq:gap}
\end{equation}
where the second equality is by Lemma~\ref{lem:mertens}.  Clearly, we can choose $c$ such that
\[
\sum_{x/c < p \le x} \frac{\ln p}{p} \;=\; \ln c + O(1) \;>\; 1
\]
(since this $O(1)$ does \emph{not} depend on $c$).  The proof of Lemma~\ref{lem:lower} is completed.
\end{proof}

Now that the first part of the outline in~(\ref{eq:outline}), i.e., $S^{2}(x) \asymp x/\ln x$, is established, we can proceed to its second part.

\section{The Estimate $\pi(x) \asymp S^{2}(x) - M(x)$}

\begin{lemma}\label{lem:pi}
$\pi(x) \asymp S^{2}(x) - M(x)$.
\end{lemma}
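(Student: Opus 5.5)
The plan is to expand the square, so that the ``diagonal'' part $M(x)$ cancels exactly. Since $\bigl(\sqrt{\ln p/p}\bigr)^2=\ln p/p$, we get
\[
E(x)=S^{2}(x)-M(x)=\sum_{\substack{p,q\le x\\ p\neq q}}\sqrt{\frac{\ln p}{p}}\sqrt{\frac{\ln q}{q}}
=2\sum_{p\le x}\sqrt{\frac{\ln p}{p}}\;\sum_{q<p}\sqrt{\frac{\ln q}{q}}
=2\sum_{p\le x}\frac{S(p-1)}{h(p)} .
\]
Here I group each unordered pair by its larger prime $p$, and write $S(p-1)=\sum_{q<p}\sqrt{\ln q/q}$ (with $S(1)=0$). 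The point of this rewriting is that each prime $p$ now contributes the ratio $S(p-1)/h(p)$. So it suffices to show that this ratio is bounded above and below by positive constants, uniformly in $p\ge 3$. Summing over $p$ would then give $E(x)\asymp \#\{3\le p\le x\}=\pi(x)-1$.

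For the upper bound I use $S(p-1)\le S(p)\le K\,h(p)$, which is Lemma~\ref{lem:upper} (valid for all $p\ge 2$ after enlarging the constant, since $S$ and $h$ are positive and finite on any bounded range). This gives $E(x)\le 2K\,\pi(x)$.

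For the lower bound, take $p\ge 3$, so that $p-1\ge 2$. Lemma~\ref{lem:lower} gives $S(p-1)>k\,h(p-1)$. The ratio $h(p)/h(p-1)$ is bounded for $p\ge 3$: it tends to $1$ as $p\to\infty$, and $h$ is positive and continuous on $[2,\infty)$. Hence $S(p-1)\ge k'\,h(p)$ for all $p\ge 3$, and therefore $E(x)\ge 2k'\bigl(\pi(x)-1\bigr)$. Finally, $\pi(x)-1\ge \tfrac12\pi(x)$ as soon as $\pi(x)\ge 2$, i.e.\ for $x\ge 3$. This yields $E(x)\ge k'\pi(x)$ for $x\ge 3$.

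The main obstacle is the boundary behaviour rather than the analysis. For $2\le x<3$ we have $E(x)=0$ while $\pi(x)=1$, so the lower inequality fails literally on that interval. I would point this out and prove the lemma for $x\ge 3$. This costs nothing for the final deduction $\pi(x)\asymp x/\ln x$, because on $[2,3]$ that statement is trivial.

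The other point needing care is that Lemma~\ref{lem:lower} is applied at $p-1$ rather than at $p$. The shift by one is absorbed by the bounded ratio $h(p)/h(p-1)$. Alternatively, one could use $S(p-1)\ge S(p/2)$, but that requires a lower bound at $p/2\ge 2$, i.e.\ $p\ge 4$, so $p=3$ would have to be handled separately; the $h(p)/h(p-1)$ route covers every $p\ge 3$ directly.
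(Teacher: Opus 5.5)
Your proof is correct and is essentially the paper's argument. Your expansion $E(x)=2\sum_{p\le x}S(p-1)/h(p)$ is exactly the telescoped identity~(\ref{eq:En}) (with $E_1=0$, $a_n=1/h(p_n)$, $S_{n-1}=S(p_n-1)$), and, as in the paper, you bound each term above and below using Lemmas~\ref{lem:upper} and~\ref{lem:lower}, absorbing the index shift through the bounded ratio $h(p)/h(p-1)$ where the paper uses $a_n/S_n\to 0$. Your observation that $E(x)=0$ while $\pi(x)=1$ for $2\le x<3$, so that $E(x)\ge k\,\pi(x)$ can only hold for $x\ge 3$, is a correct refinement that the paper passes over with its ``$n$ sufficiently large''.
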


\begin{proof}[Proof of Lemma~\ref{lem:pi}]
The main idea is to study the difference function
\[
E(x) \;=\; S^{2}(x) - M(x).
\]
Recall that $p_n$ denotes the $n$-th prime.  And, for convenience, we denote the evaluations $S(p_n)$, $M(p_n)$, and $E(p_n)$ by $S_n$, $M_n$, and $E_n$, respectively.

Then
\[
\sum_{p \le p_n} \sqrt{\frac{\ln p}{p}}
\;=\; \sum_{p \le p_{n-1}} \sqrt{\frac{\ln p}{p}} + \sqrt{\frac{\ln p_n}{p_n}},
\]
i.e., $S(p_n) = S(p_{n-1}) + a_n$, where $a_n = \sqrt{\ln p_n / p_n}$, i.e., $S_n = S_{n-1} + a_n$.  Similarly, $M_n = M_{n-1} + a_n^{2}$.  Therefore
\begin{align*}
S_n^{2} - M_n \;&=\; (S_{n-1} + a_n)^{2} - (M_{n-1} + a_n^{2}) \\
               \;&=\; S_{n-1}^{2} + 2 a_n S_{n-1} + a_n^{2} - M_{n-1} - a_n^{2} \\
               \;&=\; S_{n-1}^{2} - M_{n-1} + 2 a_n S_{n-1},
\end{align*}
i.e., $E_n = E_{n-1} + 2 a_n S_{n-1}$.  Hence
\begin{align*}
E_n     \;&=\; E_{n-1} + 2 a_n     S_{n-1}, \\
E_{n-1} \;&=\; E_{n-2} + 2 a_{n-1} S_{n-2}, \\
        \;&\;\;\vdots \\
E_2     \;&=\; E_1 + 2 a_2 S_1.
\end{align*}
Summing up all the above equalities, we get
\begin{equation}
E_n \;=\; E_1 + 2 a_n S_{n-1} + 2 a_{n-1} S_{n-2} + \cdots + 2 a_2 S_1. \label{eq:En}
\end{equation}

Now we estimate $2 a_n S_{n-1}$.  Recall that, by Lemma~\ref{lem:upper}, we have $S(x) \asymp \sqrt{x/\ln x}$.  So we have
\[
\frac{a_n}{S_n} \;\asymp\; \frac{\sqrt{\ln p_n / p_n}}{\sqrt{p_n / \ln p_n}}
\;=\; \frac{\ln p_n}{p_n} \;\longrightarrow\; 0
\qquad (\text{as } n \to \infty),
\]
thus $a_n / S_n \ll 1$, so $a_n \ll S_n$, and, consequently, $S_{n-1} = S_n - a_n \asymp S_n$.

On the other hand, since $S(x) \asymp \sqrt{x/\ln x}$, we have $S_n \asymp \sqrt{p_n / \ln p_n} = 1/a_n$, thus $a_n S_n \asymp 1$.  Combining the above observation, i.e., $S_{n-1} \asymp S_n$, we conclude that
\[
a_n S_{n-1} \;\asymp\; a_n S_n \;\asymp\; 1,
\]
i.e., there exist positive constants $k$ and $K$ such that $k \le a_n S_{n-1} \le K$ (holds for any $n$ sufficiently large).

Putting this result into Equation~(\ref{eq:En}), we conclude that $E_n \asymp n$, i.e.,
\[
S_n^{2} - M_n \;\asymp\; n,
\]
i.e., $S^{2}(p_n) - M(p_n) \asymp n = \pi(p_n)$.  Hence
\[
S^{2}(x) - M(x) \;\asymp\; \pi(x).
\]
The proof of Lemma~\ref{lem:pi} is completed.
\end{proof}

\section{Summary and the Chebyshev Bounds}

In the previous section, we see that
\[
\pi(x) \;\asymp\; S^{2}(x) - M(x).
\]
This implies $\pi(x) \asymp S^{2}(x)$.  (Notice that $M(x) = \ln x + O(1) = o(x/\ln x) = o(S^{2}(x))$, where the last equality is by $S^{2}(x) \asymp x/\ln x$.)

Combining $\pi(x) \asymp S^{2}(x)$ and $S^{2}(x) \asymp x/\ln x$ (Lemma~\ref{lem:upper} together with Lemma~\ref{lem:lower}), we finally have the following theorem.

\begin{theorem}[Chebyshev bounds]
\[
\pi(x) \;\asymp\; \dfrac{x}{\ln x}.
\]
\end{theorem}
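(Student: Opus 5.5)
The theorem will follow by chaining the three estimates already in hand, using transitivity of $\asymp$. That is, if $k_1 g\le f\le K_1 g$ and $k_2 h\le g\le K_2 h$, then $k_1k_2 h\le f\le K_1K_2 h$. The plan is to pass from $\pi(x)$ to $S^2(x)-M(x)$, then to $S^2(x)$, then to $x/\ln x$.

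First, Lemma~\ref{lem:upper} and Lemma~\ref{lem:lower} give constants with $k\sqrt{x/\ln x}<S(x)\le K\sqrt{x/\ln x}$. Since $S(x)>0$, squaring preserves the inequalities, and we get $S^2(x)\asymp x/\ln x$.

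Next I would show $S^2(x)-M(x)\asymp S^2(x)$. The upper bound is trivial because $M(x)\ge 0$. For the lower bound, Lemma~\ref{lem:mertens} gives $M(x)\le \ln x+C$. Together with $S^2(x)\ge k^2x/\ln x$, this yields $M(x)/S^2(x)\to 0$, so $M(x)\le \tfrac12 S^2(x)$ for all $x\ge x_0$, and hence $S^2(x)-M(x)\ge \tfrac12 S^2(x)$ there. Combining this with Lemma~\ref{lem:pi} gives $\pi(x)\asymp S^2(x)\asymp x/\ln x$ for $x\ge x_0$.

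The main obstacle is that several of the earlier estimates, in particular the bounds $k\le a_nS_{n-1}\le K$ used in Lemma~\ref{lem:pi}, hold only for sufficiently large $x$, while $\asymp$ is required for every $x\ge2$. I would handle the bounded range $2\le x\le x_0$ separately. On that range $\pi(x)\ge1$ and $\pi(x)\le x_0$, and $x/\ln x$ is bounded above and below by positive constants. So the two-sided bound holds there after shrinking $k$ and enlarging $K$ to cover finitely many extra cases. Taking the minimum and maximum of the constants from the two ranges then gives $\pi(x)\asymp x/\ln x$ for all $x\ge2$.

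One further point needs care: Lemma~\ref{lem:pi} was proved only at the points $x=p_n$. For general $x$ with $p_n\le x<p_{n+1}$, all of $\pi$, $S$ and $M$ are constant on that interval. Also $x/\ln x$ is increasing for $x\ge e$, and $p_{n+1}\le 2p_n$ by Bertrand's postulate. If I want to avoid Bertrand, the ratio $p_{n+1}/p_n$ is bounded by the estimate $S_{n}\asymp S_{n+1}$ already used. Either way, $p_{n+1}/\ln p_{n+1}$ is within a constant factor of $p_n/\ln p_n$, so the bound at $p_n$ transfers to all $x$ in $[p_n,p_{n+1})$.
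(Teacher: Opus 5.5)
Your proposal is correct and follows the paper's own argument. You chain Lemma~\ref{lem:pi}, the observation $M(x)=\ln x+O(1)=o(S^2(x))$, and $S^2(x)\asymp x/\ln x$ from Lemmas~\ref{lem:upper}--\ref{lem:lower}, and your separate treatment of $2\le x\le x_0$ fills a gap the paper glosses over: $E(x)=0$ on $[2,3)$, so Lemma~\ref{lem:pi} cannot hold there literally. The Bertrand/ratio step is unnecessary, though. Since $\pi$, $S$ and $M$ are all constant on $[p_n,p_{n+1})$, the bound at $p_n$ gives $E(x)\asymp\pi(x)$ on the whole interval, and Lemmas~\ref{lem:upper}--\ref{lem:lower} already hold for all real $x$, so the chain closes for every $x$ directly.
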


\section*{Acknowledgements}

The author thanks Dr.\ Buliao Wang for his sustained mentorship in analytic number theory and for checking an early version of this proof.

\end{document}